\documentclass[10pt]{article}

\usepackage{amsmath,amsfonts,amssymb,amsthm}
\usepackage{color}
\usepackage{colortbl}
\usepackage{array}
\usepackage{mathrsfs}
\usepackage{dcolumn}
\usepackage{longtable}
\usepackage{hhline}
\usepackage{bm}

\newcommand{\bracketed}[2]{\genfrac{[}{]}{0pt}{0}{#1}{#2}}
\newcommand{\braced}[2]{\genfrac{\{}{\}}{0pt}{0}{#1}{#2}}

\newcommand{\flrfunc}[2]{\genfrac{\lfloor }{\rfloor}{0pt}{0}{#1}{#2}}

\newtheorem{thm}{Theorem}[section]

\newtheorem{rmk}[thm]{Remark}

\bibliographystyle{plain}

\title{{\bf Analogies of the Qi formula for some Dowling type numbers \footnote{This research is funded by CNU Center of Research and Development}}}
\author{{\large\bf Roberto B. Corcino$\dag$} \\ {\large\bf Jeneveb T. Malusay} \\ {\large\bf Joy Antonette Cillar} \\ {\large\bf Gladys Jane Rama}\\ {\large\bf Oscar Vincent Silang} \\ {\large\bf Ianna Marie Tacoloy} \\ Department of Mathematics\\ Cebu Normal University\\ Cebu City, Philippines
\\ $^\dag$Department of Science and Technology - \\National Research Council of the Philippines \\Bicutan, Taguig City, Philippines
}

\date{}

\pagestyle{empty}

\begin{document}

\maketitle
\thispagestyle{empty}

\begin{abstract}
In the paper, the authors establish explicit formulas for the Dowling numbers and their generalizations in terms of generalizations of the Lah numbers and the Stirling numbers of the
second kind. These results generalize the Qi formula for the Bell numbers in terms of the Lah numbers and the Stirling numbers of the second kind.

\bigskip
\noindent {\bf Keywords and phrases.} Bell numbers, Stirling numbers, Lah numbers, Whitney-Lah numbers, Whitney numbers, Dowling numbers, generating function, recurrence relation

\smallskip
\noindent {\bf AMS Subject Classification}. 05A10, 05A19, 11B73
\end{abstract}

\section{\textbf{Introduction}}

Lah numbers, denoted by $L_{n,k}$, were first introduced by Ivo Lah as coefficients of the following relations
\begin{eqnarray}
(-x)_n&=&\sum\limits_{k=0}^{n}L_{n,k}(x)_k, \label{Ldef1}\\ 
(x)_n&=&\sum\limits_{k=0}^{n}L_{n,k}(-x)_k,\label{Ldef2}
\end{eqnarray}
where $(x)_n$ is the well-known Pochammer symbol for the falling factorial defined by
\begin{equation}
(x)_n=
\begin{cases}
x(x-1)\dotsb(x-n+1), & n\ge 1\\
1, & n=0.
\end{cases}
\end{equation}  \label{fall}
Note that
$$(-x)_n=(-1)^n(x)(x+1)\ldots (x+n-1)=(-1)^nx^{(n)}$$
where $x^{(n)}$ is the well-known Pochammer symbol for the rising factorial defined by
\begin{equation}
x^{(n)}=
\begin{cases}
x(x+1)\dotsb(x+n-1), & n\ge 1\\
1, & n=0.
\end{cases}
\end{equation}  \label{rise}
Hence, relation \eqref{Ldef1} can be expressed as
$$x^{(n)}=\sum\limits_{k=0}^{n}(-1)^nL_{n,k}(x)_k=\sum\limits_{k=0}^{n}L(n,k)(x)_k,$$
where the numbers $L(n,k):=(-1)^nL_{n,k}$ are called the signless Lah numbers, which can be interpreted combinatorially as the number of ways to partition an $n$-set into $k$ nonempty linearly ordered subsets \cite{Guo}. If we remove the condition that elements in each subset in the partition are linearly oredered, the number of resulting partitions is equal to the Stirling number of the second kind, denoted by $S(n,k)$. Moreover, the number of partitions of an $n$-set into any number of nonempty subsets gives the Bell numbers 
$$B_n=\sum_{k=0}^nS(n,k).$$

\smallskip
The Lah numbers $L_{n,k}$ satisfy the following recurrence relations
\begin{eqnarray}
L_{n+1,k}&=&-L_{n,k-1}-(n+k)L_{n,k},\\ \label{triLah}
L_{n+1,k}&=&\sum\limits_{i=0}^{n-k+1}(-1)^{i+1}(k+n)_iL_{n-i,k-1},\\ \label{verLah}
L_{n,k}&=&\sum\limits_{i=0}^{n-k}(-1)^{i+1}\langle n+k+1\rangle_iL_{n+1,k+i+1},\label{horiLah}
\end{eqnarray}
with $L_{n,0}=0, L_{n,k}=0$ for $n<k$, and $L_{1,1}=-1$. These relations can be used to compute quickly the first values of $L_{n,k}$.

From earlier studies, the exponential generating function and explicit formula for Lah numbers are respectively given by
\begin{equation}
L_k(t)=\sum\limits_{n\ge 0}L_{n,k}\frac{t^n}{n!}=\frac{1}{k!}\bigg(\frac{-t}{1+t}\bigg)^k,\\\label{LGF}
\end{equation}
and
\begin{equation}
L_{n,k}=(-1)^n\binom{n-1}{k-1}\frac{n!}{k!}.\label{LEF}
\end{equation}
Hence, the signless Lah numbers can be expressed as
\begin{equation*}
L(n,k)=\binom{n-1}{k-1}\frac{n!}{k!}.
\end{equation*}
These are exactly the numbers that appeared in the expansion of the $nth$ derivative of the exponential function $e^{\pm 1/t}$. That is,
$$\frac{{\rm d}^n}{{\rm d}t^n}e^{\pm 1/t}=(-1)^ne^{\pm 1/t}\sum_{k=0}^n(\pm 1)^kL(n,k)\frac{1}{t^{n+k}},$$
(see \cite{Boyadzhiev, Qi1, Zhang}). By taking $t=e^{x}$ and using the famous Faa di Bruno's Formula given by
$$\frac{{\rm d}^n}{{\rm d}t^n}f\circ g(t)=\sum_{k=0}^nf^{(k)}(h(t))B_{n,k}(h'(t), h''(t),\ldots h^{(n-k+1)}(t))$$
where $B_{n,k}(x_1, x_2,\ldots, x_{n-k+1})$ is the Bell polynomial of the second kind satisfying
$$B_{n,k}(x_1, x_2,\ldots, x_{n-k+1})=\sum_{{1\leq i\leq n, l_i\in\mathbb{N}\atop \sum_{i=0}^nil_i=n}\atop \sum_{i=0}^nl_i=k }\frac{n!}{\prod_{i=1}^{n-k+1}l_i!}\prod_{i=1}^{n-k+1}\left(\frac{x_i}{i!}\right)^{l_i}$$
with $B_{n,k}(1, 1,\ldots, 1)=S(n,k)$, F. Qi \cite{Qi} proved that the Bell numbers can be expressed in terms of the Lah numbers and the Stirling numbers of the second kinds by
\begin{equation}\label{expBell}
B_n=\sum_{k=0}^n(-1)^{n-k}\left\{\sum_{j=0}^kL(k,j)\right\}S(n,k).
\end{equation}
We call \eqref{expBell} the Qi formula for the Bell numbers. In the same paper \cite{Qi}, F. Qi provided an alternative proof for \eqref{expBell} using the identity for the Lah numbers expressed as the sum of the product of the Stirling numbers of the first and second kinds given by
\begin{equation}\label{ordlahstirling}
L_{n,k}=\sum\limits_{j=k}^{n}(-1)^js(n,j)S(j,k)
\end{equation}
and the inverse relation between the Stirling numbers of the first and second kinds
$$f_n=\sum_{k=0}^nS(n,k)g_k \Longleftrightarrow g_n=\sum_{k=0}^ns(n,k)f_k.$$ 
Other properties of the Bell numbers and that of the Lah and the Stirling numbers can be found in the recent papers of F. Qi \cite{Qi1, Qi2, Qi3}.

\smallskip
In this paper, we express some Dowling type numbers (also known as the Bell type numbers) in terms of certain generalizations of the Lah numbers and the Stirling numbers of the second kind.

\section{\textbf{Whitney-Lah Numbers}}

Whitney numbers of Dowling lattices have been introduced by Dowling \cite{Dowling} based on a class of geometric lattices associated with a finite group $G$ of order $m$. These numbers satisfy the following relations
\begin{eqnarray*}
m^n\left(\frac{x-1}{m}\right)_n&=& \sum_{k=0}^{n}w_m(n,k)x^k,\\
		 x^n&=& \sum_{k=0}^{n}m^kW_m(n,k)\left(\frac{x-1}{m}\right)_k,
\end{eqnarray*}
which can be written as
\begin{eqnarray}
(x-1|m)_n&=& \sum_{k=0}^{n}w_m(n,k)x^k,\label{w11}\\
		 x^n&=& \sum_{k=0}^{n}W_m(n,k)(x-1|m)_k,\label{w22}
\end{eqnarray}
where $m$ is a positive integer and
\begin{equation}
(x|m)_k =\prod_{i=0}^{k-1}(x-im)
\end{equation}
is called the {\it generalized factorial}. The Whitney numbers of the second kind satisfy the following recurrence relation
\begin{equation}\label{recur1}
W_m(n,k)=W_m(n-1,k-1)+(1+mk)W_m(n-1,k).
\end{equation}
Further investigation has been done by Benoumhani \cite{Benoumhani} that yields several properties for $W_m(n,k)$, including an interesting relation 
\begin{equation*}
W_m(n,k)=\sum\limits_{i=k}^{n}\binom{n}{i}m^{i-k}S(i,k), \quad n\ge 0, 0\le k\le n.
\end{equation*}
It is known that Benoumhani was the first one to call the numbers 
$$D_n(m)=\sum_{k=0}^nW_m(n,k)$$
as the {\it Dowling numbers}. Parallel to the Qi formula \eqref{expBell}, it is interesting to derive an explicit formula for the Dowling numbers in terms of certain generaizations of the Lah numbers (Whitney-Lah numbers) and the Whitney numbers of the second kind.

\smallskip
Now, parallel to (\ref{Ldef1}) and (\ref{Ldef2}), we can define the Whitney-Lah numbers, denoted by $L^W_{n,k}(\alpha)$, as coefficients of the following relations
\begin{equation}\label{GLahdef1}
(-x-1|\alpha)_n=\sum_{k=0}^{n}L^W_{n,k}(\alpha)(x-1|\alpha)_k
\end{equation}
and
\begin{equation}\label{GLahdef2}
(x-1|\alpha)_n=\sum_{k=0}^{n}L^W_{n,k}(\alpha)(-x-1|\alpha)_k.
\end{equation}

\smallskip
One important property of these Whitney-Lah numbers is their inverse relation. To obtain this, first we need to establish the following orthogonality relation. 

\smallskip 
\begin{thm} The Whitney-Lah numbers satisfy the following orthogonality relations
\begin{equation}\label{ortho}
\sum_{k=p}^{n}L^W_{n,k}(\alpha)L^W_{k,p}(\alpha)=\delta_{n,p}
\end{equation}
where $\delta_{n,p}$ is the Kronecker delta.
\end{thm}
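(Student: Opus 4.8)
The plan is to treat \eqref{ortho} as the assertion that the coefficient array $(L^W_{n,k}(\alpha))$ is its own inverse, and to derive this directly by composing the two defining relations \eqref{GLahdef1} and \eqref{GLahdef2}. First I would start from \eqref{GLahdef1}, which expands $(-x-1|\alpha)_n$ in the family $\{(x-1|\alpha)_k\}$, and then substitute into each term the expansion of $(x-1|\alpha)_k$ in the family $\{(-x-1|\alpha)_p\}$ furnished by \eqref{GLahdef2}. This produces the double sum
\begin{equation*}
(-x-1|\alpha)_n=\sum_{k=0}^{n}L^W_{n,k}(\alpha)\sum_{p=0}^{k}L^W_{k,p}(\alpha)(-x-1|\alpha)_p .
\end{equation*}

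Interchanging the order of summation, so that $p$ runs from $0$ to $n$ and, for each fixed $p$, the inner index $k$ runs from $p$ to $n$, regroups the right-hand side as $\sum_{p=0}^{n}\left(\sum_{k=p}^{n}L^W_{n,k}(\alpha)L^W_{k,p}(\alpha)\right)(-x-1|\alpha)_p$. Since the left-hand side is exactly $(-x-1|\alpha)_n$, its own expansion in this same family has coefficient $\delta_{n,p}$ in front of $(-x-1|\alpha)_p$. Equating the two expansions term by term then yields \eqref{ortho} at once.

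The step requiring genuine care is the uniqueness of these expansions, which rests on a basis property: each $(-x-1|\alpha)_p=\prod_{i=0}^{p-1}(-x-1-i\alpha)$ is a polynomial in $x$ of degree exactly $p$ with nonzero leading coefficient, so the family $\{(-x-1|\alpha)_p\}_{p\ge 0}$ is linearly independent and every polynomial has a unique expansion in it. A second, more delicate point is the legitimacy of applying \eqref{GLahdef1} and \eqref{GLahdef2} with one and the same coefficient array, which is what keeps the substitution meaningful rather than circular. I expect this to be the main thing to pin down, and I would verify it by applying the substitution $x\mapsto -x$ to \eqref{GLahdef1}: this sends $(x-1|\alpha)_k$ to $(-x-1|\alpha)_k$ and $(-x-1|\alpha)_n$ to $(x-1|\alpha)_n$, thereby reproducing \eqref{GLahdef2} verbatim and confirming that the two relations share identical coefficients. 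Once this symmetry is established, the orthogonality is essentially forced and no further obstacle should arise.
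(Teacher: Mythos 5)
Your proposal is correct and takes essentially the same route as the paper's proof: substitute \eqref{GLahdef2} into \eqref{GLahdef1}, interchange the order of summation, and compare coefficients of $(-x-1|\alpha)_p$ to read off the Kronecker delta. The two points you flag for extra care---the linear independence of the family $\{(-x-1|\alpha)_p\}_{p\ge 0}$ guaranteeing uniqueness of the expansion, and the $x\mapsto -x$ substitution confirming that \eqref{GLahdef1} and \eqref{GLahdef2} genuinely share one coefficient array---are valid refinements of details the paper leaves implicit.
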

\begin{proof}
Using the relations in \eqref{GLahdef1} and \eqref{GLahdef2}, we get
\begin{align*}
(-x-1|\alpha)_n&= \sum_{k=0}^{n}L^W_{n,k}(\alpha)(x-1|\alpha)_k= \sum_{k=0}^{n}L^W_{n,k}(\alpha)\sum_{p=0}^{k}L^W_{k,p}(\alpha)(-x-1|\alpha)_p\\
			 &= \sum_{p=0}^{n}\left\lbrace \sum_{k=p}^{n}L^W_{n,k}(\alpha)L^W_{k,p}(\alpha)\right\rbrace (-x-1|\alpha)_p
\end{align*}
Thus, we obtain the desired orthogonality relation.
\end{proof}

\smallskip
The following theorem contains an inverse relation for the Whitney-Lah numbers, which can be shown using the above orthogonality relation.

\smallskip
\begin{thm}
The Whitney-Lah numbers satisfy the following inverse relations:
\begin{align}
f_n&= \sum_{k=0}^{n}L^W_{n,k}(\alpha)g_k\Longleftrightarrow g_n=\sum_{k=0}^{n}L^W_{n,k}(\alpha)f_k\label{inv1}\\
f_k&= \sum_{n=k}^{\infty}L^W_{n,k}(\alpha)g_n\Longleftrightarrow g_k=\sum_{n=k}^{\infty}L^W_{n,k}(\alpha)f_n\label{inv2}
\end{align}
\end{thm}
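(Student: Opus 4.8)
The plan is to derive both inverse relations as immediate consequences of the orthogonality relation \eqref{ortho} established in the previous theorem, since that relation already says precisely that the lower-triangular array $\big(L^W_{n,k}(\alpha)\big)$ is its own inverse. The whole argument is therefore a standard matrix-inversion/summation-interchange computation, and the only point requiring real attention is the legitimacy of rearranging the infinite sums in \eqref{inv2}.

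For \eqref{inv1} I would assume $f_n=\sum_{k=0}^{n}L^W_{n,k}(\alpha)g_k$ and substitute this into the right-hand side of the claimed consequence. Interchanging the two finite sums (which is automatic here) and regrouping by the innermost index gives
$$\sum_{k=0}^{n}L^W_{n,k}(\alpha)f_k=\sum_{k=0}^{n}L^W_{n,k}(\alpha)\sum_{p=0}^{k}L^W_{k,p}(\alpha)g_p=\sum_{p=0}^{n}g_p\sum_{k=p}^{n}L^W_{n,k}(\alpha)L^W_{k,p}(\alpha).$$
Applying \eqref{ortho} collapses the inner sum to $\delta_{n,p}$, leaving exactly $g_n$. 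The converse implication is obtained by the identical computation with the roles of $(f_n)$ and $(g_n)$ interchanged; this is permissible because \eqref{ortho} is symmetric under that exchange (the array squares to the identity).

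For \eqref{inv2} I would proceed analogously, but with the indices running upward to infinity. Assuming $f_k=\sum_{n=k}^{\infty}L^W_{n,k}(\alpha)g_n$, equivalently $f_n=\sum_{p=n}^{\infty}L^W_{p,n}(\alpha)g_p$, I would substitute into $\sum_{n=k}^{\infty}L^W_{n,k}(\alpha)f_n$ and interchange the order of summation over the index region $\{(n,p):p\ge n\ge k\}$ to obtain
$$\sum_{n=k}^{\infty}L^W_{n,k}(\alpha)f_n=\sum_{p=k}^{\infty}g_p\sum_{n=k}^{p}L^W_{p,n}(\alpha)L^W_{n,k}(\alpha).$$
After renaming indices, the inner sum is again $\delta_{p,k}$ by \eqref{ortho}, so the whole expression reduces to $g_k$, and the reverse implication follows by the same symmetry as before.

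The step I expect to demand the most care is the interchange of the two summations in \eqref{inv2}. Unlike the purely algebraic, finite rearrangement used for \eqref{inv1}, here both sums are genuinely infinite, and the regrouping over $\{(n,p):p\ge n\ge k\}$ is valid only under a suitable justification: one should either restrict to sequences for which the relevant double series converges absolutely, so that Fubini's theorem applies, or interpret \eqref{inv2} as an identity between coefficient sequences (equivalently, of formal power series) in an appropriate space, where the rearrangement is legitimate by definition. Once this interchange is granted, the orthogonality relation does all of the remaining work, and both biconditionals follow.
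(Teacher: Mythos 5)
Your proposal is correct and follows exactly the route the paper intends: the paper gives no written proof of this theorem, stating only that it ``can be shown using the above orthogonality relation,'' and your substitution-plus-interchange argument collapsing the inner sum to $\delta_{n,p}$ via \eqref{ortho} is precisely that standard derivation. Your added caution about justifying the rearrangement of the genuinely infinite sums in \eqref{inv2} (absolute convergence or a formal-series interpretation) is a point the paper silently omits, and including it only strengthens the argument.
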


\smallskip
To generate quickly the first values of the Whitney-Lah numbers, we need to establish some recurrence relations. Here, we give three types of recurrence relations: the triangular recurrence relation, vertical recurrence relation and horizontal recurrence relation.

\smallskip
\begin{thm}\label{triGLahthm}
The Whitney-Lah numbers satisfy the following triangular, vertical and horizontal recurrence relations:
\begin{align}
L^W_{n+1,k}(\alpha)&=-L^W_{n,k-1}(\alpha)-((k+n)\alpha+2) L^W_{n,k}(\alpha),\label{triGLah}\\
L^W_{n+1,k}(\alpha)&=\sum\limits_{i=0}^{n-k+1}(-1)^{i+1}\{n+k\}^{\underline{i}}_{\alpha}L^W_{n-i,k-1}(\alpha),\label{verGLah}\\
L^W_{n,k}(\alpha)&=\sum\limits_{i=0}^{n-k}(-1)^{i+1}\{n+k+1\}^{\overline{i}}_\alpha L^W_{n+1,k+i+1}(\alpha)\label{horiGLah}
\end{align}
where $\{x\}^{\underline{i}}_{\alpha}:=\prod_{j=0}^{i-1}((x-i)\alpha+2)$ and $\{x\}^{\overline{i}}_{\alpha}:=\prod_{j=0}^{i-1}((x+j)\alpha+2)$.
\end{thm}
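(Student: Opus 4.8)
The plan is to derive the triangular recurrence \eqref{triGLah} directly from the defining relation \eqref{GLahdef1}, and then obtain the vertical and horizontal recurrences by iterating it. First I would use the factorization $(-x-1|\alpha)_{n+1}=(-x-1-n\alpha)(-x-1|\alpha)_n$, which is immediate from $(x|m)_k=\prod_{i=0}^{k-1}(x-im)$. Substituting the expansion \eqref{GLahdef1} for $(-x-1|\alpha)_n$ gives
\[
(-x-1|\alpha)_{n+1}=(-x-1-n\alpha)\sum_{k=0}^{n}L^W_{n,k}(\alpha)(x-1|\alpha)_k.
\]
The key algebraic step is the identity $-x-1-n\alpha=-(x-1-k\alpha)-((n+k)\alpha+2)$, combined with $(x-1-k\alpha)(x-1|\alpha)_k=(x-1|\alpha)_{k+1}$. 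Together these rewrite each summand $(-x-1-n\alpha)(x-1|\alpha)_k$ as $-(x-1|\alpha)_{k+1}-((n+k)\alpha+2)(x-1|\alpha)_k$. After shifting the index $k\mapsto k-1$ in the first resulting sum and comparing the coefficients of $(x-1|\alpha)_k$ with the expansion of $(-x-1|\alpha)_{n+1}$ from \eqref{GLahdef1}, the triangular recurrence \eqref{triGLah} drops out.

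For the vertical recurrence \eqref{verGLah} I would iterate \eqref{triGLah}, repeatedly eliminating the ``second-kind'' term $L^W_{\cdot,k}(\alpha)$. Applying \eqref{triGLah} to $L^W_{n,k}(\alpha)$, then to $L^W_{n-1,k}(\alpha)$, and so on, produces after $i$ steps a contribution whose coefficient is the product of consecutive factors $((n+k)\alpha+2)((n+k-1)\alpha+2)\cdots$, i.e.\ precisely the falling product $\{n+k\}^{\underline{i}}_\alpha$, attached to $L^W_{n-i,k-1}(\alpha)$ with sign $(-1)^{i+1}$. I would make this rigorous by induction on $i$, carrying the partial identity
\[
L^W_{n+1,k}(\alpha)=\sum_{j=0}^{i}(-1)^{j+1}\{n+k\}^{\underline{j}}_\alpha L^W_{n-j,k-1}(\alpha)+(-1)^{i+1}\{n+k\}^{\underline{i+1}}_\alpha L^W_{n-i,k}(\alpha),
\]
where the inductive step uses $\{n+k\}^{\underline{i+1}}_\alpha\cdot((n+k-i-1)\alpha+2)=\{n+k\}^{\underline{i+2}}_\alpha$ to absorb the newly created factor. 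The boundary condition $L^W_{n,k}(\alpha)=0$ for $n<k$ forces the trailing remainder $L^W_{n-i,k}(\alpha)$ to vanish once $i=n-k+1$, which both terminates the sum and yields \eqref{verGLah}.

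The horizontal recurrence \eqref{horiGLah} I would obtain by iterating \eqref{triGLah} in the opposite direction, solving instead for the ``lower'' term. Rearranging \eqref{triGLah} and relabeling $k\mapsto k+1$ gives $L^W_{n,k}(\alpha)=-L^W_{n+1,k+1}(\alpha)-((n+k+1)\alpha+2)L^W_{n,k+1}(\alpha)$. Repeatedly substituting this into its own last term $L^W_{n,k+1}(\alpha),L^W_{n,k+2}(\alpha),\dots$ generates coefficients that are rising products $((n+k+1)\alpha+2)((n+k+2)\alpha+2)\cdots=\{n+k+1\}^{\overline{i}}_\alpha$ multiplying $L^W_{n+1,k+i+1}(\alpha)$ with sign $(-1)^{i+1}$. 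An induction on the number of iterations, parallel to the one above but now invoking $\{n+k+1\}^{\overline{i+1}}_\alpha\cdot((n+k+i+2)\alpha+2)=\{n+k+1\}^{\overline{i+2}}_\alpha$, establishes the intermediate identity, and the boundary condition $L^W_{n,k}(\alpha)=0$ for $k>n$ kills the remainder term $L^W_{n,k+i+1}(\alpha)$ as soon as $i=n-k$, giving the finite sum \eqref{horiGLah}.

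I expect the main obstacle to be purely bookkeeping rather than conceptual: verifying that the iteration produces exactly the falling product $\{x\}^{\underline{i}}_\alpha$ and the rising product $\{x\}^{\overline{i}}_\alpha$ with the correct indices and the alternating sign $(-1)^{i+1}$, and confirming that the remainder terms vanish precisely at the stated upper limits $n-k+1$ and $n-k$. The delicate point is that the inductive step must match the telescoping structure of the products $\prod_{j}((x\mp j)\alpha+2)$ exactly, so the index shifts distinguishing $\{n+k\}^{\underline{i}}_\alpha$ from $\{n+k+1\}^{\overline{i}}_\alpha$ must be tracked with care.
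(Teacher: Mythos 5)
Your proposal is correct and takes essentially the same approach as the paper: the triangular recurrence is obtained from the factorization $(-x-1|\alpha)_{n+1}=(-x-1-n\alpha)(-x-1|\alpha)_n$ together with the split $-x-1-n\alpha=-(x-1-k\alpha)-((n+k)\alpha+2)$ and comparison of coefficients of $(x-1|\alpha)_k$, and the vertical and horizontal recurrences then follow by iterating it. Your explicit inductions with vanishing remainder terms ($L^W_{n,k}(\alpha)=0$ for $k>n$) are in fact a more careful rendering of what the paper only sketches, namely writing out the repeated application for \eqref{verGLah} and, for \eqref{horiGLah}, letting the right-hand side telescope via \eqref{triGLah}.
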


\begin{proof}
To prove \eqref{triGLah}, we write \eqref{GLahdef1} as 
$$\sum_{k=0}^{n+1}L^W_{n+1,k}(\alpha)(x-1|\alpha)_k=(-x-1|\alpha)_n(-x-1-n\alpha)\qquad\qquad\qquad\qquad\qquad\qquad$$
\begin{align*}
	&\;\;\;\;\;=(-1)\bigg[\sum_{k=0}^{n}L^W_{n,k}(\alpha)(x-1|\alpha)_k(x-1-k\alpha+k\alpha+n\alpha+2)\bigg]\\
	&\;\;\;\;\;=-\sum_{k=0}^{n+1}L^W_{n,k-1}(\alpha)(x|\alpha)_k-\sum_{k=0}^{n+1}((k+n)\alpha+2) L^W_{n,k}(\alpha)(x-1|\alpha)_k\\
	&\;\;\;\;\;=\sum_{k=0}^{n+1}\big[-L^W_{n,k-1}(\alpha)-((k+n)\alpha+2) L^W_{n,k}(\alpha)\big](x-1|\alpha)_k.
\end{align*}
Then, comparing coefficients of $(x-1|\alpha)_k$ yields the desired triangular recurrence relation.

The vertical recurrence relation in \eqref{verGLah} can be obtained by repeated application of \eqref{triGLah}. That is, we have
$$L^W_{n+1,k}(\alpha)=(-1)L^W_{n,k-1}(\alpha)+(-1)^2((k+n)\alpha+2) L^W_{n-1,k-1}(\alpha)\qquad\qquad\qquad\qquad\qquad\qquad$$
\begin{align*}		
		&\;\;\;\;+(-1)^3((k+n)\alpha+2)((k+n-1)\alpha+2)L^W_{n-2,k-1}(\alpha)\\
		&\;\;\;\;+(-1)^4\alpha^3\prod_{i=0}^{2}((k+n-i)\alpha+2)L^W_{n-3,k-1}(\alpha)\\
		&\;\;\;\;+\dotsb+(-1)^{n-k+2}\alpha^{n-k+1}\prod_{i=0}^{n-k}((k+n-i)\alpha+2)L^W_{n-(n-k+1),k-1}(\alpha).
\end{align*}
which is exactly (\ref{verGLah}). To prove the horizontal recurrence relation in \eqref{horiGLah}, we simply evaluate the right-hand side of \eqref{horiGLah} using \eqref{triGLah}.
\end{proof}

Using recurrence relation in \eqref{triGLah}, we can generate quickly the following table of values for $L^W_{n,k}(\alpha)$.

{
\begin{center}
\begin{tabular}{c|llllll}
$n\backslash k$ & 0 & 1 & 2 & 3\\
\hline
0 & 1\\
1 & $-2$ &$-1$\\
2 & $2(\alpha+2)$ & $2(\alpha+2)$ & 1\\
3 & $-4(\alpha+1)(\alpha+2)$ & $-6(\alpha+1)(\alpha+2)$ &$-6(\alpha+1)$ & $-1$
\end{tabular}

\smallskip
{\it Table 1}
\end{center}
}

\begin{rmk}\rm
Recently, F. Qi \cite{Qi5, Qi6} derived interesting forms of recurrence relations, called {\it diagonal recurrence relations}, for the Stirling numbers of the first and second kinds. The method used in deriving such recurrence relation is quite impressive. Hence, it is also interesting to derive diagonal recurrence relations for the Whitney-Lah numbers using this method. 
\end{rmk}

\section{\textbf{Explicit formula for Dowling numbers}}

One can easily verify using (\ref{w11}) and (\ref{w22}) that the Whitney numbers satisfy the following orthogonality and inverse relations
\begin{align}
&\;\sum_{k=j}^{n}W_{\alpha}(n,k)w_{\alpha}(k,j)=\sum_{k=j}^{n}w_{\alpha}(n,k)W_{\alpha}(k,j)=\delta_{n,j}\\
&\;f_n=\sum_{k=0}^{n}w_{\alpha}(n,k)g_k\Longleftrightarrow g_n=\sum_{k=0}^{n}W_{\alpha}(n,k)f_k.\label{in.w}
\end{align}
The following theorem contains a relation which is parallel to the above orthogonality relation of Whitney numbers.

\begin{thm}
The Whitney-Lah numbers satisfy
\begin{equation}
L^W_{n,j}(\alpha)=\sum\limits_{k=j}^{n}(-1)^kw_{\alpha}(n,k)W_{\alpha}(k,j).\label{wla1}
\end{equation}
\end{thm}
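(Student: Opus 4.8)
The plan is to derive \eqref{wla1} exactly as one derives the classical identity \eqref{ordlahstirling}, namely by reading the defining relation \eqref{GLahdef1} of the Whitney–Lah numbers as a composition of two changes of basis: the Whitney numbers of the first kind \eqref{w11} convert generalized factorials into ordinary powers, and the Whitney numbers of the second kind \eqref{w22} convert ordinary powers back into generalized factorials. Chaining these two expansions will express $(-x-1|\alpha)_n$ directly in the basis $\{(x-1|\alpha)_j\}$, and the coefficients so produced must, by the definition \eqref{GLahdef1}, be precisely the $L^W_{n,j}(\alpha)$.

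Concretely, first I would start from the first-kind relation \eqref{w11}, which is a polynomial identity in $x$, and substitute $x\mapsto -x$. Since the left-hand side $(x-1|\alpha)_n$ becomes exactly $(-x-1|\alpha)_n$, this substitution yields
\[
(-x-1|\alpha)_n=\sum_{k=0}^{n}w_{\alpha}(n,k)(-x)^k=\sum_{k=0}^{n}(-1)^k w_{\alpha}(n,k)\,x^k,
\]
which is the source of the sign factor $(-1)^k$ in \eqref{wla1}. Next I would insert the second-kind relation \eqref{w22}, writing each power as $x^k=\sum_{j=0}^{k}W_{\alpha}(k,j)(x-1|\alpha)_j$, to obtain
\[
(-x-1|\alpha)_n=\sum_{k=0}^{n}(-1)^k w_{\alpha}(n,k)\sum_{j=0}^{k}W_{\alpha}(k,j)(x-1|\alpha)_j.
\]
Interchanging the order of summation over the triangular region $0\le j\le k\le n$ regroups this as $\sum_{j=0}^{n}\bigl[\sum_{k=j}^{n}(-1)^k w_{\alpha}(n,k)W_{\alpha}(k,j)\bigr](x-1|\alpha)_j$.

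Finally, I would compare this expansion with the defining relation \eqref{GLahdef1}, which expands the same quantity $(-x-1|\alpha)_n$ in the basis $\{(x-1|\alpha)_j\}$ with coefficients $L^W_{n,j}(\alpha)$. Since each $(x-1|\alpha)_j=\prod_{i=0}^{j-1}(x-1-i\alpha)$ is a monic polynomial of degree $j$, the family $\{(x-1|\alpha)_j\}_{j\ge 0}$ is a basis of the polynomial ring and the expansion is unique; equating coefficients of $(x-1|\alpha)_j$ on both sides then gives \eqref{wla1}. I do not expect a genuine obstacle here: the argument is a clean double change of basis, and the only points demanding care are the sign bookkeeping introduced by the substitution $x\mapsto -x$ (which is what produces the $(-1)^k$ weight, not a $(-1)^{n-j}$ or similar) and the explicit justification that the generalized factorials form a basis so that coefficient comparison is legitimate.
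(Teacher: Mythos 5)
Your proof is correct and takes essentially the same route as the paper's: substitute $x\mapsto -x$ in \eqref{w11} to produce the $(-1)^k$ weights, insert \eqref{w22} for $x^k$, interchange the summations over the triangle $0\le j\le k\le n$, and compare coefficients of $(x-1|\alpha)_j$ against the defining relation \eqref{GLahdef1}. Your additional observation that the $(x-1|\alpha)_j$ are monic of degree $j$ and hence form a basis (so that coefficient comparison is legitimate) is a worthwhile explicit justification of a step the paper leaves implicit.
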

\begin{proof}
Note that \eqref{w11} can be rewritten as
\begin{equation}
(-t-1|\alpha)_n=\sum\limits_{k=0}^{n}(-1)^kw_{\alpha}(n,k)t^k.\label{w55}
\end{equation}
Substitute \eqref{w22} in \eqref{w55},
\begin{align*}
\sum\limits_{j=0}^{n}L^W_{n,j}(\alpha)(t-1|\alpha)_j&=\sum\limits_{k=0}^{n}(-1)^kw_{\alpha}(n,k)\sum\limits_{j=0}^{k}W_{\alpha}(k,j)(t-1|\alpha)_j\\
			 &=\sum\limits_{k=0}^{n}\sum\limits_{j=0}^{k}(-1)^kw_{\alpha}(n,k)W_{\alpha}(k,j)(t-1|\alpha)_j.
\end{align*}
Thus, comparing coefficients of $(t-1|\alpha)_j$ proves the theorem.
\end{proof}

The following theorem contains an explicit formula for the Dowling numbers which is expressed in terms of Whitney numbers of the second kind and the Whitney-Lah numbers.

\begin{thm}\label{expDow}
The Dowling numbers equal
\begin{equation}\label{dow1}
D_{n}(\alpha)=\sum_{j=0}^{n}(-1)^{n-j}\left[ \sum_{k=0}^{j}(-1)^jL^W_{j,k}(\alpha)\right] W_{\alpha}(n,j).
\end{equation}
\end{thm}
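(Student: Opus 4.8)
The plan is to derive \eqref{dow1} from the single identity \eqref{wla1} by summing out one index and then invoking the Whitney inversion \eqref{in.w}. The right quantity to attack is the row sum of the Whitney--Lah numbers, because the Dowling number is itself a row sum, $D_k(\alpha)=\sum_{j=0}^{k}W_{\alpha}(k,j)$, so summing \eqref{wla1} over its first lower index should surface $D_k(\alpha)$ naturally.

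First I would sum \eqref{wla1} over $j$ from $0$ to $n$ and interchange the order of summation. The original region is $0\le j\le n$ with $j\le k\le n$, i.e. $0\le j\le k\le n$; after the swap the outer index $k$ runs from $0$ to $n$ while $j$ runs from $0$ to $k$, and the inner $j$-sum collapses to $\sum_{j=0}^{k}W_{\alpha}(k,j)=D_k(\alpha)$. This produces the key intermediate identity
$$\sum_{j=0}^{n}L^W_{n,j}(\alpha)=\sum_{k=0}^{n}(-1)^{k}w_{\alpha}(n,k)\,D_k(\alpha).$$

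Next I would read this as the forward direction of \eqref{in.w}. Setting $f_n:=\sum_{j=0}^{n}L^W_{n,j}(\alpha)$ and, crucially, absorbing the sign into $g_k:=(-1)^{k}D_k(\alpha)$, the intermediate identity is exactly $f_n=\sum_{k=0}^{n}w_{\alpha}(n,k)g_k$. Applying \eqref{in.w} inverts this to $g_n=\sum_{k=0}^{n}W_{\alpha}(n,k)f_k$, that is,
$$(-1)^{n}D_n(\alpha)=\sum_{k=0}^{n}W_{\alpha}(n,k)\left[\sum_{j=0}^{k}L^W_{k,j}(\alpha)\right].$$

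Finally I would reconcile the signs. Multiplying through by $(-1)^{n}$ and writing $(-1)^{n}=(-1)^{n-k}(-1)^{k}$, I push one factor $(-1)^{k}$ inside the bracket, turning it into $\sum_{j=0}^{k}(-1)^{k}L^W_{k,j}(\alpha)$; relabeling the outer index $k\to j$ and the inner index $j\to k$ then reproduces \eqref{dow1} verbatim. The whole computation is essentially forced once \eqref{wla1} is available, so I expect no serious obstacle; the only points demanding care are the bookkeeping in the index interchange (verifying the triangle $\{0\le j\le k\le n\}$ is swept correctly) and placing the $(-1)^{k}$ factor so that \eqref{in.w} applies directly rather than in a twisted form.
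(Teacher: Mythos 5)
Your proof is correct, and it rests on exactly the same two ingredients as the paper's proof---the identity \eqref{wla1} and the Whitney inversion \eqref{in.w}---but it applies them in the opposite order. The paper fixes the second index $k$, reads \eqref{wla1} in the form $L^W_{n,k}(\alpha)=\sum_{j=k}^{n}w_{\alpha}(n,j)\,(-1)^{j}W_{\alpha}(j,k)$, applies \eqref{in.w} once for each fixed $k$ (with $f_n=L^W_{n,k}(\alpha)$, $g_j=(-1)^{j}W_{\alpha}(j,k)$) to get $W_{\alpha}(n,k)=\sum_{j=0}^{n}(-1)^{n}W_{\alpha}(n,j)L^W_{j,k}(\alpha)$, and only then sums over $k$ to produce $D_{n}(\alpha)$. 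You instead sum over the second index first, so the inner sum collapses to $D_k(\alpha)$ and yields the intermediate identity $\sum_{j=0}^{n}L^W_{n,j}(\alpha)=\sum_{k=0}^{n}(-1)^{k}w_{\alpha}(n,k)D_k(\alpha)$, and then you invert exactly once, for the pair of row-sum sequences. Since the finite summation commutes with the inversion, the two routes are interchangeable and equally strong; the merits of yours are economy and explicitness---a single application of \eqref{in.w} rather than a family of them, a clean intermediate identity expressing the Whitney--Lah row sums as an alternating $w_{\alpha}$-transform of the Dowling numbers, and a fully carried-out sign reconciliation $(-1)^{n}=(-1)^{n-j}(-1)^{j}$ with the final relabeling, steps which the paper's proof leaves to the reader by stopping at an unsimplified double sum.
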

\begin{proof}
The identity in (\ref{wla1}) can be expressed as
\begin{equation}
L^W_{n,k}(\alpha)= \sum_{j=k}^{n}w_{\alpha}(n,j)(-1)^jW_{\alpha}(j,k).\label{lw}
\end{equation}
Using the inverse relation in \eqref{in.w} with 
$$g_j=(-1)^jW_{\alpha}(j,k)\;\; \mbox{and} \;\;f_n=L^W_{n,k}(\alpha),$$ 
equation \eqref{lw} yields
\begin{align*}
  W_{\alpha}(n,k)&= \sum_{j=0}^{n}(-1)^nW_{\alpha}(n,j)L^W_{j,k}(\alpha)\\
  D_{n}(\alpha) &= \sum_{j=0}^{n}\sum_{k=0}^{n}(-1)^nW_{\alpha}(n,j)L^W_{j,k}(\alpha).
\end{align*}
\end{proof}

The following table contains the first values of Dowling numbers $D_n(\alpha)$ and Whitney numbers $W(n,k;\alpha)$ when $\alpha=3$.

\bigskip
\begin{center}
$\;\;\;\;W_3(n,k)$\\
\begin{tabular}{l|c|lllllll}
$D_n(3)$ & $n\backslash k$ & 0 & 1 & 2 & 3 \\
\hline
1 & 0 & 1\\
2 & 1 & 1 &1\\
7 & 2 & 1 & 5 & 1\\
35 & 3 & 1 & 21 & 12 & 1
\end{tabular}

\smallskip
{\it Table 2}
\end{center}
Now, we can verify the formula in \eqref{dow1} using Tables 1 and 2. For $n=3$ and $\alpha=3$, we have
\begin{align*}
D_3(3)&=(-1)^3\sum\limits_{j=0}^{3}(-1)^{3-j}\left[\sum\limits_{k=0}^{3}(-1)^jL^W_{j,k}(3)\right]W_3(3,j)\\
	  &=-(1(1)-3(21)+21(12)-225(1))=35.
\end{align*}
Note that these values of $D_2(3)$ and $D_3(3)$ coincide with the values that appeared in the first column of Table 2.

\begin{rmk}\rm
When $\alpha=1$, (\ref{dow1}) reduces to 
\begin{equation}\label{specialcase}
D_{n}(1)=\sum_{j=0}^{n}(-1)^{n-j}\left[ \sum_{k=0}^{n}(-1)^jL^W_{j,k}(1)\right] W_1(n,j).
\end{equation}
It is worth noting that, when $\alpha=1$, the above-mentioned finite group $G$ is just a trivial group and that the Dowling lattice is isomorphic to $\Pi_{n+1}$, the lattice of partitions of an $(n + 1)$-element set. It follows that
$$B_{n+1}=D_n(1) \;\;\mbox{and}\;\; S(n+1, j+1) = W_1(n,j).$$
Hence, \eqref{specialcase} can be expressed as
$$B_{n+1}=\sum_{j=0}^{n}(-1)^{n-j}\left[ \sum_{k=0}^{n}(-1)^jL^W_{j,k}(1)\right] S(n+1, j+1).$$
We can verify this formula for particular values of $n$, say $n=3$. That is, 
\begin{align*}
B_{4}&=\sum_{j=0}^{3}(-1)^{3-j}\left[ \sum_{k=0}^{3}(-1)^jL^W_{j,k}(1)\right] S(4, j+1)\\
&=(-1)[1]+(1)[3](7)+(-1)[13](6)+(1)[73](1)=15.
\end{align*}
\end{rmk}

\section{{\bf Explicit formula for $r$-Bell numbers}}
In 1984, certain generalizations of the classical Stirling numbers were introduced by Broder \cite{BRODER}. These numbers are called the $r$-Stirling numbers of the first and second kinds which are denoted by $\bracketed{n}{k}_r$ and $\braced{n}{k}_{r}$, respectively. They are defined combinatorially as follows:

\bigskip
\indent \indent $\bracketed{n}{k}_{r}$ := the number of permutations of an $n$-set into $k$ nonempty \\ \indent \indent\indent\indent \indent  cycles such that the numbers $1,2, \ldots , r$ are in distinct \\ \indent \indent\indent\indent \indent cycles.\\
\indent \indent $\braced{n}{k}_{r}$ := the number of partitions of an $n$-set into $k$ nonempty \\ \indent \indent\indent\indent \indent  subsets such that the numbers $1,2, \ldots , r$ are in distinct \\ \indent \indent\indent\indent \indent subsets.

\bigskip
\noindent These numbers have some properties (see \cite{BRODER}), which are parallel to those of the classical Stirling numbers. F. Qi has mentioned in \cite{Qi7} that these numbers are equivalent to the weighted Stirling numbers of Carlitz \cite{CARL2}, denoted by $S_r(n,k)$, which can be generated by 
\[\frac{(e^t-1)^k}{k!}e^{rt}=\sum_{n=0}^{\infty}S_r(n,k)\frac{t^n}{n!}.\]
This is exactly the generating function that generates the $r$-Stirling numbers of the second kind in \cite[p. 250, Theorem 16]{BRODER}.

\bigskip
Recently, Nyul and Racz \cite{NYUL} also defined combinatorially the $r$-Lah numbers parallel to the definition of $r$-Stirling numbers of the second kind. Precisely, the $r$-Lah numbers are defined by

\bigskip
\indent \indent $\flrfunc{n}{k}_r$:= the number of partitions of a set with $n+r$ elements into \\ \indent \indent\indent\indent  $k+r$ nonempty ordered subsets such that $r$ distinguished \\ \indent \indent\indent\indent elements have to be in distinct ordered blocks.

\bigskip
\noindent These numbers satisfy the following identities and relations
\begin{equation}\label{Lah1}
\flrfunc{n}{k}_{r} = \sum_{j=k}^{n}\bracketed{n}{j}_{r}\braced{j}{k}_{r}
\end{equation}

\begin{equation}\label{Lah4}
b_{n} = \sum_{j=0}^{n} \bracketed{n}{j}_r a_j \Longleftrightarrow a_n = \sum_{j=0}^{n} (-1)^{n-j} \braced{n}{j}_{r} b_{j}
\end{equation}
(see \cite{NYUL}). It is worth mentioning that the first values of $r$-Lah numbers can be computed quickly using the triangular recurrence relation \cite{NYUL}
\begin{equation}\label{tr}
 \flrfunc{n+1}{k}_r= \flrfunc{n}{k-1}_r+(n+k+2r) \flrfunc{n}{k}_r.
\end{equation}
The following table can be generated using \eqref{tr} with $r=2$ and $n=1,2,\ldots,5$

\begin{center}
\begin{tabular}{c|lllllllll}
$n/k$ & 0 & 1 & 2 & 3 & 4 & 5\\
\hline
0 &    1\\
1 &    4 &   1\\
2 &   20 &  10 &   1\\
3 &  120 &  90 &  18 &   1\\
4 &  840 & 840 & 252 &  28 &  1\\
5 & 6720 & 8400& 3360& 560 & 40 & 1\\
\end{tabular}

\smallskip
{\it Table 3}
\end{center}

More interesting results for $r$-Lah numbers have been established by Nyul and Racz in \cite{NYUL}, including the unimodality, maximizing indices, exponential generating function, orthogonality relation and some relations that connect $r$-Lah numbers and $r$-Stirling numbers. However, a relation parallel to that in Theorem \ref{expDow} has not been considered yet in \cite{NYUL}.  

\smallskip
In 2011, Mez\H{o} \cite{Mezo1} defined $r$-Bell numbers, denoted by $B_{n,r}$, as the sum of $r$-Stirling numbers of the second kind. That is, 
$$B_{n,r} = \sum_{k=0}^{n} \braced{n}{k}_{r}.$$
Then $B_{n,r}$ can be interpreted as the number of ways to partition a set with $n$ elements such that the first $r$ elements are in distincts subsets in each partition. These numbers have some interesting properties and identities, which can be found in \cite{Mezo1}. The following theorem contains an explicit formula for $r$-Bell numbers which is expressed in terms of $r$-Lah numbers and $r$-Stirling numbers of the second kind. 

\bigskip
\begin{thm}\label{rBell}
For $n \in \mathbb{N}$, the $r$-Bell numbers $B_{n,r}$ can be computed in terms of $r$-Lah numbers $\flrfunc{n}{k}_r$ and $r$-Stirling numbers $\braced{n}{k}_{r}$ of the second kind by
\begin{equation}\label{expB}
B_{n,r} = \sum_{k=0}^{n} (-1)^{n-k}\braced{n}{k}_{r}\sum_{j=0}^{k} \flrfunc{k}{j}_r.
\end{equation}
\end{thm}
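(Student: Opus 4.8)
The plan is to mirror the proof of Theorem~\ref{expDow} for the Dowling numbers, substituting the $r$-analogue ingredients: the identity \eqref{Lah1} that writes each $r$-Lah number as a sum of products of $r$-Stirling numbers of the first and second kinds, together with the inversion pair \eqref{Lah4}. The whole argument should reduce to inverting \eqref{Lah1} against the $r$-Stirling transform and then summing over a free index, so no new machinery is needed beyond what the excerpt already supplies. (In principle one could instead pursue an exponential-generating-function route in the spirit of Qi's Fa\`a di Bruno proof of \eqref{expBell}, but the inversion route is far more direct given the lemmas in hand.)

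First I would fix $k$ and read \eqref{Lah1} as an instance of the forward half of the inverse relation \eqref{Lah4}. Extending the $j$-sum down to $j=0$ is harmless, since $\braced{j}{k}_r=0$ whenever $j<k$, so \eqref{Lah1} becomes $\flrfunc{n}{k}_r=\sum_{j=0}^{n}\bracketed{n}{j}_r\braced{j}{k}_r$, which is exactly $b_n=\sum_{j=0}^n\bracketed{n}{j}_r a_j$ with $b_n=\flrfunc{n}{k}_r$ and $a_j=\braced{j}{k}_r$. Applying the inverse half of \eqref{Lah4}, namely $a_n=\sum_{j=0}^n(-1)^{n-j}\braced{n}{j}_r b_j$, then yields the key identity
\[
\braced{n}{k}_r=\sum_{j=0}^{n}(-1)^{n-j}\braced{n}{j}_r\flrfunc{j}{k}_r .
\]

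Finally I would sum this identity over $k$ from $0$ to $n$. The left-hand side collapses to $B_{n,r}=\sum_{k=0}^n\braced{n}{k}_r$ by definition of the $r$-Bell numbers, while on the right I interchange the two finite sums and collect the inner $k$-sum; because $\flrfunc{j}{k}_r=0$ for $k>j$, that inner sum runs only to $j$, giving $B_{n,r}=\sum_{j=0}^n(-1)^{n-j}\braced{n}{j}_r\sum_{k=0}^{j}\flrfunc{j}{k}_r$, which is precisely \eqref{expB} after relabeling the summation indices $j\leftrightarrow k$. The only point demanding attention is the sign and index bookkeeping — matching the $(-1)^{n-j}$ of the signed inversion \eqref{Lah4} against the signless decomposition \eqref{Lah1}, and justifying the interchange of summation — but since every sum is finite and the $r$-Lah and $r$-Stirling numbers vanish outside the triangular range, there is no genuine analytic obstacle, and the essential content is simply the correct application of \eqref{Lah4}.
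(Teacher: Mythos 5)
Your proposal is correct and follows essentially the same route as the paper's own proof: reading \eqref{Lah1} as the forward half of the inversion \eqref{Lah4} with $b_n=\flrfunc{n}{k}_r$ and $a_j=\braced{j}{k}_r$, extracting the identity $\braced{n}{k}_r=\sum_{j=0}^{n}(-1)^{n-j}\braced{n}{j}_r\flrfunc{j}{k}_r$, and then summing over the free index to produce $B_{n,r}$. The only (immaterial) difference is that you relabel the indices after summing rather than before, and your handling of the index bookkeeping (extending sums via the vanishing of $\braced{j}{k}_r$ and $\flrfunc{j}{k}_r$ outside the triangular range) is in fact more careful than the paper's.
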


\begin{proof} Using \eqref{Lah1} with $b_{n}=  \flrfunc{n}{k}_r$ and $ a_{j} =  \braced{j}{k}_{r}$, the inverse relation in \eqref{Lah4} gives
$$ \braced{n}{k}_{r} =  \sum_{j=0}^{n} (-1)^{n-j} \braced{n}{j}_{r} \flrfunc{j}{k}_r.$$
Replacing $j$ with $k$ gives
\begin{equation*}
\braced{n}{j}_{r} =  \sum_{k=0}^{n} (-1)^{n-k} \braced{n}{k}_{r} \flrfunc{k}{j}_r.
\end{equation*}
Summing up over $j$ from $r$ to $n$ yields
\begin{equation*}
\sum_{j=0}^{n} \braced{n}{j}_{r} = \sum_{j=0}^{n} \sum_{k=0}^{n} (-1)^{n-k} \braced{n}{k}_{r} \flrfunc{k}{j}_r.
\end{equation*}
This is exactly the desired explicit formula.
\end{proof}

\bigskip
To verify formula \eqref{expB}, we will use the following table of values for $r$-Stirling numbers of the second kind and $r$-Bell numbers with $r=2$, 

\begin{center}
\begin{tabular}{c|l|llllllll}
$B_{n,r}$ & $n/k$ & 0 & 1 & 2 & 3 & 4 & 5 & 6\\
\hline
1   & 0 &    1\\
3   & 1 &    2 &   1\\
10  & 2 &    4 &   5 &   1\\
37  & 3 &    8 &  19 &   9 &   1\\
151 & 4 &   16 &  65 &  55 &  14 &  1\\
674 & 5 &   32 & 211 & 285 & 125 & 20 & 1\\
\end{tabular}

\smallskip
{\it Table 4}
\end{center}

This can be generated using the following triangular recurrence relation
$$\braced{n+1}{k}_{r}=\braced{n}{k-1}_{r}+(k+r)\braced{n}{k}_{r}.$$
Table 4 shows that $B_{3,2}=37$. Using formula \eqref{expB}, we have
\begin{align*}
B_{3,2}&=(-1)^3(1)(8)+(-1)^2(4+1)(19)+(-1)^1(20+10+1)(9)\\
&\;\;\;\;+(-1)^0(120+90+18+1)(1)=37.
\end{align*}
This confirms the value of $B_{3,2}$.

\bigskip
\section{{\bf Explicit formula for $r$-Dowling numbers}}
The $r$-Whitney numbers of the second kind are defined in \cite{Mezo} by
\begin{equation*}
m^n(x)_n=\sum_{k=0}^n(-1)^{n-k}w_{m,r}(n,k)(mx+r)^k
\end{equation*}
and 
\begin{equation*}
(mx+r)^n=\sum_{k=0}^nm^kW_{m,r}(n,k)(x)_k.
\end{equation*}
These numbers have some interesting properties including recurrence relations, generating functions, explicit formulas and their connection with Bernoulli numbers (see \cite{MERCA, Mezo}). Some combinatorial and statistical applications of these numbers are given in \cite{CORCA} which conclude that these numbers are asymptotically normal. Asymptotic formulas are completely discussed in \cite{CORJAY, ACALA, VEGACB1, VEGACB2}. One can easily verify that these numbers satisfy the inverse relation
\begin{equation}\label{inv}
f_n=\sum_{j=0}^{n}(-1)^{n-j}w_{m,r}(n,j)g_j\Longleftrightarrow g_n=\sum_{j=0}^{n}W_{m,r}(n,j)f_j.
\end{equation}
On the other hand, the $r$-Whitney-Lah numbers are defined by Cheon and Jung \cite{CHEON} by
\begin{equation}\label{rwhitneylah}
L_{m,r}(n,k)=\sum_{j=k}^{n}w_{m,r}(n,j)W_{m,r}(j,k),
\end{equation}
which is parallel to \eqref{ordlahstirling}. Several properties of $L_{m,r}(n,k)$ have been derived through factorization of the $r$-Whitney-Lah matrix $[L_{m,r}(n,k)]_{n,k\geq0}$ (see \cite{CHEON}) including the triangular recurrence relation
\begin{equation}\label{triWLah}
L_{m,r}(n,k)=L_{m,r}(n-1,k-1)+(2r + (n + k - 1)m)L_{m,r}(n-1,k),
\end{equation} 
{explicit formula}
\begin{equation}\label{exprWLah}
L_{m,r}(n,k)=\binom{n}{k}\frac{[2r|m]_n}{[2r|m]_k}
\end{equation}
and {horizontal generating function} 
$$[x+2r|m]_n=\sum_{k=0}^nL_{m,r}(n,k)(x|m)_k$$
where
\begin{equation*}
[x|m]_n=(x)(x+m)\ldots (x+(n-1)m)
\end{equation*}
and
\begin{equation*}
(x|m)_k=x(x-m)(x-2m)\ldots (x-(k-1)m).
\end{equation*}

\smallskip
The triangular recurrence relation in \eqref{triWLah} can be used to obtain two more forms of recurrence relations. More precisely, by repeated application of \eqref{triWLah}, we immediately obtain the following vertical recurrence relation for $r$-Whitney-Lah numbers:
\begin{equation}\label{vert_recur}
L_{m,r}(n+1,k+1)=\sum_{j=k}^n(2r + (n + k + 1)m|m)_{n-j}L_{m,r}(j,k).
\end{equation}
Moreover, it can be easily verified using \eqref{triWLah} that $L_{m,r}(n,k)$ satisfy the following horizontal recurrence relation
\begin{equation}\label{hori_recur}
L_{m,r}(n,k)=\sum\limits_{i=0}^{n-k}(-1)^{i}[2r+(n+k+1)m|m]_iL_{m,r}(n+1,k+i+1).
\end{equation}
Note that, by taking $m=1$, \eqref{vert_recur} and \eqref{hori_recur} respectively give the following recurrence relations 
\begin{equation*}
\flrfunc{n+1}{k+1}_r= \sum_{j=k}^n(n+k+2r+1)_{n-j} \flrfunc{j}{k}_r
\end{equation*}
and
\begin{equation*}
\flrfunc{n}{k}_r=\sum\limits_{i=0}^{n-k}(-1)^{i}\langle n+k+2r+1\rangle_i\flrfunc{n+1}{k+i+1}_r.
\end{equation*}

\smallskip
On the other hand, using the explicit formula of $L_{m,r}(n,k)$, one can easily verify that 
$$L_{m,r}(n,k-1)+L_{m,r}(n,k+1)<[L_{m,r}(n,k)]^2$$
is equivalent to $k(n-k)(2r+(k-1)m)<(k+1)(n-k+1)(2r+km)$, which is obviously true. This implies that $(L_{m,r}(n,k))_{k=0}^n$ is strictly log-concave and is, consequently, unimodal.

\smallskip
To derive an analogue version of the explicit formula in Theorem \ref{rBell}, we rewrite first the relation in \eqref{rwhitneylah} as follows
\begin{equation*}
(-1)^nL_{m,r}(n,k)=\sum_{j=k}^{n}(-1)^{n-j}w_{m,r}(n,j)(-1)^jW_{m,r}(j,k).
\end{equation*}
Now, with $g_j=(-1)^jW_{m,r}(j,k)$ and $f_n=(-1)^nL_{m,r}(n,k)$, the inverse relation in (\ref{inv}) yields
\begin{equation*}
				W_{m,r}(n,k)=\sum_{j=0}^{n}(-1)^{n-j}W_{m,r}(n,j)L_{m,r}(j,k) 
\end{equation*}
and
\begin{equation*}
				D_{m,r}(n)=\sum_{j=0}^{n}(-1)^{n-j}\left[\sum_{k=0}^{n}L_{m,r}(j,k) \right]W_{m,r}(n,j). 
\end{equation*}
This result is formally stated in the following theorem.

\begin{thm}\label{rWhitneNumber}
The $r$-Dowling numbers equal
\begin{equation}\label{expl_rDow}
D_{m,r}(n)=\sum\limits_{j=0}^{n}(-1)^{n-j}\left[\sum\limits_{k=0}^{j}L_{m,r}(j,k)\right]W_{m,r}(n,j).
\end{equation}
\end{thm}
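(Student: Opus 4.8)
The plan is to mirror the arguments already used for the Dowling numbers in Theorem~\ref{expDow} and for the $r$-Bell numbers in Theorem~\ref{rBell}: start from the defining relation \eqref{rwhitneylah} of the $r$-Whitney-Lah numbers, invert it against the $r$-Whitney numbers of the second kind by means of the inverse relation \eqref{inv}, and then sum over the lower index to produce $D_{m,r}(n)=\sum_k W_{m,r}(n,k)$.

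First I would recast \eqref{rwhitneylah} into the left-hand shape required by \eqref{inv}. Because that inverse relation is sign-asymmetric, carrying a factor $(-1)^{n-j}$ on the $w_{m,r}$-side but none on the $W_{m,r}$-side, whereas \eqref{rwhitneylah} carries no signs at all, I would multiply by $(-1)^n$ and split $(-1)^n=(-1)^{n-j}(-1)^j$, obtaining
$$(-1)^nL_{m,r}(n,k)=\sum_{j=k}^{n}(-1)^{n-j}w_{m,r}(n,j)\,(-1)^jW_{m,r}(j,k).$$
Reading this against \eqref{inv} with $f_n=(-1)^nL_{m,r}(n,k)$ and $g_j=(-1)^jW_{m,r}(j,k)$, and extending the range down to $j=0$ since $W_{m,r}(j,k)=0$ for $j<k$, the inversion $g_n=\sum_{j=0}^nW_{m,r}(n,j)f_j$ yields, after clearing the overall sign,
$$W_{m,r}(n,k)=\sum_{j=0}^{n}(-1)^{n-j}W_{m,r}(n,j)L_{m,r}(j,k).$$

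Finally I would sum both sides over $k$ from $0$ to $n$, recognize $\sum_k W_{m,r}(n,k)=D_{m,r}(n)$ on the left, interchange the order of summation on the right, and collect the inner sum, giving
$$D_{m,r}(n)=\sum_{j=0}^{n}(-1)^{n-j}\left[\sum_{k=0}^{j}L_{m,r}(j,k)\right]W_{m,r}(n,j),$$
where the inner index may be truncated at $k=j$ because $L_{m,r}(j,k)=0$ for $k>j$.

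The only real obstacle is the sign bookkeeping in the first step: the asymmetry of \eqref{inv} forces one to attach exactly the factor $(-1)^j$ to $W_{m,r}(j,k)$ and absorb the complementary $(-1)^{n-j}$ into the $w_{m,r}$ coefficient, so that the unsigned $W_{m,r}$-inversion applies cleanly. Once the signs are arranged correctly everything else is mechanical reindexing, so I do not anticipate any deeper difficulty.
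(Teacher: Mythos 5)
Your proposal is correct and follows essentially the same route as the paper: rewrite \eqref{rwhitneylah} as $(-1)^nL_{m,r}(n,k)=\sum_{j=k}^{n}(-1)^{n-j}w_{m,r}(n,j)(-1)^jW_{m,r}(j,k)$, invert via \eqref{inv} with $f_n=(-1)^nL_{m,r}(n,k)$ and $g_j=(-1)^jW_{m,r}(j,k)$, and sum over $k$. If anything, you are slightly more careful than the paper, which leaves the inner sum's upper limit at $n$ in its derivation, whereas you explicitly justify truncating at $k=j$ using $L_{m,r}(j,k)=0$ for $k>j$.
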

To verify this formula for a specific value of $n, m, r$, we need the following table of values for $D_{m,r}(n)$ and $W_{m,r}(n,k)$ with $m=2$ and $r=2$:
\begin{center}
$\qquad\qquad W_{2,2}(n,k)$\\
\begin{tabular}{c|c|lllll}
$D_{2,2}(n)$ & $n/k$ & 0 & 1 & 2 & 3 & 4\\
\hline
1 & 0 & 1 & 0 & 0 & 0 & 0\\
3 & 1 & 2 & 1 & 0 & 0 & 0\\
11 & 2 & 4 & 6 & 1 & 0 & 0\\
49 & 3 & 8 & 28 & 12 & 1 & 0\\
257 & 4 & 16 & 120 & 100 & 20 & 1\\
\end{tabular}
\end{center}
and the table of values for $L_{m,r}(n,k)$ where $m=2$ and $r=2$, which can be generated using \eqref{triWLah}:
\begin{center}
\begin{tabular}{c|llllll}
$n/k$ & 0 & 1 & 2 & 3 & 4 & $\sum\limits_{k=0}^{n}L_{2,2}(j,k)$\\
\hline
0 & 1 & 0 & 0 & 0 & 0 & 1\\
1 & 4 & 1 & 0 & 0 & 0 & 5\\
2 & 24 & 12 & 1 & 0 & 0 & 37\\
3 & 192 & 144 & 24 & 1 & 0 & 361\\
4 & 1920 & 1920 & 480 & 40 & 1 & 4361\\
\end{tabular}
\end{center}

\bigskip
\noindent Using the explicit formula in Theorem \ref{rWhitneNumber}, we get
\begin{align*}
D_{2,2}(4)&=\sum\limits_{j=0}^{4}(-1)^{4-j}\left[\sum\limits_{k=0}^{j}L_{2,2}(j,k)\right]W_{2,2}(4,j)\\
		&=(1)(16)-(5)(120)+(37)(100)-(361)(20)+(4361)(1)\\
		&=257.
\end{align*}
This is exactly the value of $D_{2,2}(4)$ that appeared in the table.

\bigskip
\section{Further Generalization}
There are still many forms of generalization of Bell and Stirling numbers that can possibly be given explicit formula analogous to those in Theorems \ref{expDow}, \ref{rBell} and \ref{rWhitneNumber}. For instance, the generalised Stirling numbers of the first and second kind that appeared in the paper by Tauber \cite{Tauber} are defined respectively by
\begin{equation}\label{tauber1}
P_k(x,n)=\sum_{j=0}^nC_{k,n}^jx^j, \;\;k=1,2
\end{equation}
and
\begin{equation}\label{tauber2}
x^n=\sum_{j=0}^nD_{k,n}^jP_k(x,j), \;\;k=1,2
\end{equation}
for any two given sequences of polynomials $P_1(x,n)$ and $P_2(x,n)$. In relation to this, certain generalized Lah numbers $L_{k,h,n}^j$ are defined by Tauber \cite{Tauber} as
\begin{equation}\label{tauber3}
P_k(x,n)=\sum_{j=0}^nL_{k,h,n}^jP_h(x,j),\;\;\; k\neq h.
\end{equation}
Tauber has proved that these numbers satisfy
\begin{equation}\label{rel1}
L_{k,h,n}^m=\sum_{j=m}^nC_{k,n}^jD_{h,j}^m.
\end{equation}
For appropriate choice of polynomial $P_k(x,n)$, different known pair of Stirling-type numbers can be deduced from the pair of numbers $\{C_{k,n}^j, D_{k,n}^j\}$. For instance, 
by taking $P_k(x,n)=((-1)^{k-1}x-1|\alpha)_n$, we have 
\begin{align*}
\{C_{1,n}^j, D_{1,n}^j\}&=\left\{w_{\alpha}(n,j), W_{\alpha}(n,j)\right\},\\
\{C_{2,n}^j, D_{2,n}^j\}&=\left\{(-1)^jw_{\alpha}(n,j),(-1)^nW_{\alpha}(n,j)\right\},\\
L_{2,1,n}^j&=L_{n,j}^W(\alpha),
\end{align*} 
and equation \eqref{rel1} yields
$$L_{n,m}^W(\alpha)=L_{2,1,n}^m=\sum_{j=m}^nC_{2,n}^jD_{1,j}^m=\sum_{j=m}^n(-1)^jw_{\alpha}(n,j)W_{\alpha}(j,m)$$
which is exactly the identity in \eqref{wla1}. If we let $P_k(x,n)=((-1)^{k-1}x-(1-(-1)^{k-1})r)_n$, we have
\begin{align*}
\{C_{1,n}^j, D_{1,n}^j\}&=\left\{(-1)^{n-k}\bracketed{n}{j}_r, \braced{n}{j}_r\right\},\\
\{C_{2,n}^j, D_{2,n}^j\}&=\left\{(-1)^{n}\bracketed{n}{j}_r,(-1)^n\braced{n}{j}_r\right\},\\
L_{2,1,n}^j&=(-1)^n\flrfunc{n}{j}_r,
\end{align*} 
and equation \eqref{rel1} yields
$$(-1)^n\flrfunc{n}{m}_r=L_{2,1,n}^m=\sum_{j=m}^nC_{2,n}^jD_{1,j}^m=\sum_{j=m}^n(-1)^{n}\bracketed{n}{j}_r\braced{j}{m}_r$$
or
$$\flrfunc{n}{m}_r=\sum_{j=m}^n\bracketed{n}{j}_r\braced{j}{m}_r$$
which is exactly the identity in \eqref{Lah1}. Moreover, by taking $P_k(x,n)=(x+(1-(-1)^{k-1}r|(-1)^{k-1}m)_n$, we have 
\begin{align*}
\{C_{1,n}^j, D_{1,n}^j\}&=\left\{(-1)^{n-j}w_{\alpha,r}(n,j), W_{\alpha,r}(n,j)\right\},\\
\{C_{2,n}^j, D_{2,n}^j\}&=\left\{w_{\alpha,r}(n,j),(-1)^{n-j}W_{\alpha,r}(n,j)\right\},\\
L_{2,1,n}^j&=L_{\alpha,r}(n,j),
\end{align*} 
and equation \eqref{rel1} yields
$$L_{\alpha,r}(n,m)=L_{2,1,n}^m=\sum_{j=m}^nC_{2,n}^jD_{1,j}^m=\sum_{j=m}^nw_{\alpha,r}(n,j)W_{\alpha,r}(j,m)$$
which is exactly the identity in \eqref{rwhitneylah}.

\smallskip
In a separate paper of Tauber \cite{Tauber1}, the above generalised Stirling numbers have been shown to satisfy
$$\sum_{m=j}^nC_{k,n}^mD_{k,m}^j=\sum_{m=j}^nD_{k,n}^jC_{k,m}^j=\delta_{n,j},$$
which implies immediately the inverse relation
$$f_n=\sum_{m=0}^nC_{k,n}^mg_m\Longleftrightarrow g_n=\sum_{m=0}^nD_{k,n}^mf_m.$$
Applying this inverse relation to \eqref{rel1} yields
\begin{equation*}
D_{h,n}^m=\sum_{j=0}^nD_{k,n}^jL_{k,h,j}^m,
\end{equation*}
and
\begin{equation}\label{rel2}
\sum_{m=0}^nD_{h,n}^m=\sum_{m=0}^n\sum_{j=0}^nD_{k,n}^jL_{k,h,j}^m.
\end{equation}
Thus, if we denote the right-hand side of \eqref{rel2} by $B_{h,n}$, then we have
\begin{equation}\label{rel3}
B_{h,n}=\sum_{j=0}^n\left[\sum_{m=0}^nL_{k,h,j}^m\right]D_{k,n}^j.
\end{equation}
One can easily verify that, with $k=2$ and $h=1$, \eqref{rel3} gives the explicit formulas in Theorems \ref{expDow}, \ref{rBell} and \ref{rWhitneNumber}.

\smallskip
The unified generalization of Stirling numbers $\{S^1(n,k), S^2(n,k)\}$ are defined by Hsu and Shiue \cite{Hsu} as coefficients of the following expansion
\begin{equation}\label{ug1}
(t|\alpha)_n=\sum_{k=0}^nS^1(n,k)(t-\gamma|\beta)_k
\end{equation}
and
\begin{equation}\label{ug2}
(t|\beta)_n=\sum_{k=0}^nS^2(n,k)(t+\gamma|\alpha)_k
\end{equation}
where $\{S^1(n,k), S^2(n,k)\}=\{S(n,k;\alpha,\beta,\gamma), S(n,k;\beta,\alpha,-\gamma)\}$. These numbers satisfy
\begin{equation*}
\sum_{k=n}^mS^1(m,k)S^2(k,n)=\sum_{k=n}^mS^2(m,k)S^1(k,n)=\delta_{m,n}
\end{equation*}
and 
\begin{equation}\label{invrel}
f_n=\sum_{k=0}^nS^1(n,k)g_k \Longleftrightarrow g_n=\sum_{k=0}^nS^2(n,k)f_k.
\end{equation}
To obtain an explicit formula analogous to those in Theorems \ref{expDow}, \ref{rBell} and \ref{rWhitneNumber}, one must define a generalized Lah-type numbers $L(n,j;\alpha,\beta,\gamma)$ such that
\begin{equation}\label{genlah}
L(n,j;\alpha,\beta,\gamma)=\sum_{k=j}^n(-1)^kS^2(n,k)S^1(k,j).
\end{equation}
Then, by applying the inverse relation in \eqref{invrel}, we have
\begin{equation*}
(-1)^{n}S^1(n,j)=\sum_{k=0}^nS^1(n,k)L(k,j;\alpha,\beta,\gamma)
\end{equation*}
and
\begin{equation*}
S(n,j;\alpha,\beta,\gamma)=(-1)^{n}\sum_{k=0}^nS(n,k;\alpha,\beta,\gamma)L(k,j;\alpha,\beta,\gamma).
\end{equation*}
Consequently, the generalized Bell numbers $W_n$ in \cite{Hsu} defined by
$$W_n=\sum_{j=0}^nS(n,j;\alpha,\beta,\gamma),$$
can be expressed in terms of the unified generalization of the Stirling numbers and certain generalized Lah-type numbers as 
\begin{equation}\label{ugexp}
W_n=\sum_{k=0}^n(-1)^n\left[\sum_{j=0}^nL(k,j;\alpha,\beta,\gamma)\right]S(n,k;\alpha,\beta,\gamma).
\end{equation}
It is worth mentioning that the generalized Stirling numbers $c_{n,k}$ defined by N. Caki\'c \cite{Cakic} as
$$(x|\alpha)_n=\sum_{k=0}^nc_{n,k}(x|1)_k,$$
can be expressed in terms of the unified generalization of the Stirling numbers by replacing $\alpha$ with $-\alpha$, $\beta$ with 1 and $\gamma$ with 0. That is,
$$c_{n,k}=S(n,k;-\alpha, 1, 0).$$
Hence, if we denote the sum of $c_{n,k}$ by $\hat{B}_n(\alpha)$, that is,
$$\hat{B}_n(\alpha)=\sum_{k=0}^nc_{n,k},$$
then using \eqref{ugexp} 
$$\hat{B}_n(\alpha)=\sum_{k=0}^n(-1)^n\left[\sum_{j=0}^nL(k,j;-\alpha,1,0)\right]c_{n,k}.$$
Furthermore, with $\alpha=0$ and $\gamma=r$, we get
\begin{align*}
w_{\beta}(n,k)&=S(n,k;\beta,0,-1),\\
W_{\beta}(n,k)&=S(n,k;0,\beta,1),\\
L_{n,k}^W(\beta)&=L(n,k;0,\beta,1),
\end{align*}
and the explicit formula in \eqref{ugexp} coincides with that in Theorem \ref{expDow}. When $\alpha=0$, $\beta=1$ and $\gamma=r$, we have
\begin{equation*}
\bracketed{n}{k}_r=S(n,k;1,0,-r), \;\;\braced{n}{k}=S(n,k;0,1,r), \;\;\flrfunc{n}{k}_r=(-1)^nL(n,k;0,1,r),
\end{equation*}
and the explicit formula in \eqref{ugexp} coincides with that in Theorem \ref{rBell}. Lastly, when $\alpha=0$ and $\gamma=r$, we get
\begin{align*}
w_{\beta,r}(n,k)&=(-1)^{n-k}S(n,k;\beta,0,-r),\\
W_{\beta,r}(n,k)&=S(n,k;0,\beta,r),\\
L_{\beta,r}(n,k)&=(-1)^nL(n,k;0,\beta,r),
\end{align*}
and the explicit formula in \eqref{ugexp} coincides with that in Theorem \ref{rWhitneNumber}.

\smallskip
On the other hand, further extension of the unified generalization of Stirling numbers, denoted by $S(n,k,\alpha,\beta,\gamma;\epsilon)$ is defined by T-X. He \cite{He} as 
$$S(\gamma,\eta,\alpha,\beta,r;\epsilon):=\frac{1}{\beta^{\eta}\Gamma (\eta+1)}\lim_{z\to r}\Delta_{\beta}^{\eta, \epsilon}\left(\langle z\rangle_{\gamma,-\alpha}\right).$$
Several properties for $S(\gamma,\eta,\alpha,\beta,r;\epsilon)$ are established in \cite{He} including triangular recurrence relation, explicit formula and exponential generating function. One can easily verify that the above unified generalization of Stirling numbers by Hsu and Shuie can be deduced from $S(n,k,\alpha,\beta,\gamma;\epsilon)$ using the exponential generating function for $S(\gamma,\eta,\alpha,\beta,r;\epsilon)$ with $\epsilon=0$. That is, 
$$S(n,k;\alpha,\beta,\gamma)=S(n,k,\alpha,\beta,\gamma;0)$$
Then, it would be interesting to consider the following pair and the corresponding Bell-type numbers 
$$\left\{S(n,k,\alpha,\beta,r;\epsilon),S(n,k,\beta,\alpha,-r;\epsilon)\right\}, \;\;W(n;\epsilon)=\sum_{k=0}^nS(n,k,\alpha,\beta,r;\epsilon)$$
as well as Lah-type numbers 
$$L(n,j,\alpha,\beta,\gamma,\epsilon)=\sum_{k=j}^n(-1)^kS(n,k,\beta,\alpha,-r;\epsilon)S(k,j,\alpha,\beta,r;\epsilon)$$
and establish explicit formula analogue to that in \eqref{ugexp}.

\bigskip
\noindent {\bf Acknowledgement}. The authors wish to thank the anonymous referee for reading the manuscript thoroughly which resulted in numerous corrections and improvements.

\end{document}